\documentclass[12pt,reqno]{amsart}

\usepackage[T1]{fontenc}
\usepackage[utf8]{inputenc}
\usepackage{lmodern}
\usepackage{microtype}
\usepackage{mathtools}
\usepackage{amssymb}
\usepackage{enumitem}
\usepackage{xcolor}
\usepackage{hyperref}
\usepackage[nameinlink,capitalise,noabbrev]{cleveref}

\hypersetup{
  colorlinks=true,
  linkcolor=blue!55!black,
  citecolor=green!45!black,
  urlcolor=blue!65!black,
  pdftitle={Module-Valued 2-Local Derivations on Reductive Lie Algebras},
  pdfauthor={Yang Chen, Yongqi Luo, Junzi Xu},
  pdfsubject={Lie algebras and representation theory},
  pdfkeywords={2-local derivation, reductive Lie algebra, finite-dimensional module,
    homogeneous map, nilpotent central action, invariant vector, affine realizer}
}

\newtheorem{theorem}{Theorem}[section]
\newtheorem{lemma}[theorem]{Lemma}
\newtheorem{proposition}[theorem]{Proposition}
\newtheorem{corollary}[theorem]{Corollary}
\theoremstyle{definition}

\theoremstyle{remark}
\newtheorem{remark}[theorem]{Remark}
\numberwithin{equation}{section}

\newcommand{\F}{\mathbb{F}}
\newcommand{\C}{\mathbb{C}}
\newcommand{\g}{\mathfrak{g}}
\newcommand{\s}{\mathfrak{s}}
\newcommand{\z}{\mathfrak{z}}
\newcommand{\h}{\mathfrak{h}}
\newcommand{\tori}{\mathfrak{h}}
\newcommand{\aLie}{\mathfrak{a}}

\newcommand{\Der}{\operatorname{Der}}
\newcommand{\Ad}{\operatorname{Ad}}

\newcommand{\Gr}{\operatorname{Gr}}
\newcommand{\Span}{\operatorname{span}}
\newcommand{\id}{\operatorname{id}}

\newcommand{\Homog}{\operatorname{Homog}}

\newcommand{\prz}{\operatorname{pr}_{\z}}

\begin{document}

\setcounter{page}{1}

\title[2-Local Derivations on Reductive Lie Algebras]
{Module-Valued 2-Local Derivations on Reductive Lie Algebras}

\author{Yang Chen}

\address{Y. Chen: School of Science, Jimei University, Xiamen, Fujian, 361021, P. R. China}

\email{chenyang1729@hotmail.com}

\author{Yongqi Luo}

\address{Y. Luo: School of Science, Jimei University, Xiamen, Fujian, 361021, P. R. China}

\email{luoyongqi777@outlook.com}

\author{Junzi Xu}

\address{J. Xu: School of Science, Jimei University, Xiamen, Fujian, 361021, P. R. China}

\email{2711681629@qq.com}

\date{}

\subjclass[2020]{17B40, 17B10, 17B20, 16W25}

\keywords{2-local derivation, reductive Lie algebra, module, homogeneous map}

\begin{abstract} Let \(\F\) be an algebraically closed field of characteristic zero,
\(\g=\s\oplus\z\) a finite-dimensional reductive Lie algebra over \(\F\),
and \(V\) an arbitrary finite-dimensional \(\g\)-module.  We classify all 2-local derivations of \(\g\) on \(V\),
and show that every 2-local derivation is a derivation if and only if
\(\dim\z\leq1\) or \(V^\g=0\).  If \(\dim\z\geq2\) and \(V^\g\ne0\), the
nonlinear homogeneous maps give all exceptional 2-local derivations.
\end{abstract}

\maketitle

\section{Introduction}

In 1997, \v{S}emrl \cite{Sem} introduced the notion of 2-local derivations on algebras,
whose values at every two points can be simultaneously realized by an actual derivation.
For Lie algebras, Ayupov, Kudaybergenov, and Rakhimov proved that every
2-local derivation of a finite-dimensional semisimple Lie algebra is a derivation \cite{AyuKudRak}.
Wang, Li, and Tang formulated the
module-valued problem and settled it for simple
\(\mathfrak{sl}_2(\C)\)-modules \cite{WLT2022}. The completely
reducible \(\mathfrak{sl}_2\)-case was subsequently obtained in
\cite{WTL2025}.  In this paper, we classify all 2-local derivations of the finite-dimensional reductive Lie algebra on
an arbitrary finite-dimensional module. At a conceptual level, this conclusion is a Lie-module analogue of the
nonlinear Gleason--Kahane--\.Zelazko principle of Kowalski and S{\l}odkowski
\cite{KS}: in both settings, two-point realizability forces
one global realizer. The main proof in this paper is inspired by \cite{YZ} and A. Premet's comments on \cite{YZ}.
Throughout, the ground field is the algebraically closed field \(\F\) of
characteristic zero, and all Lie algebras and modules are finite-dimensional.
We write \(xv\) for the action of \(x\in\g\) on \(v\in V\).

A derivation from the Lie algebra \(\g\) to a \(\g\)-module \(V\) is a linear map
\(D\colon\g\to V\) satisfying
\[
  D([x,y])=xD(y)-yD(x),
  \quad \forall x,y\in\g.
\]
A map \(\Delta\colon\g\to V\), with no linearity assumption, is a
\emph{2-local derivation} if for every \(x,y\in\g\) there exists a derivation
\(D_{x,y}\colon\g\to V\) such that
\[
  \Delta(x)=D_{x,y}(x),
  \qquad
  \Delta(y)=D_{x,y}(y).
\]
Let \(\operatorname{2LocDer}(\g,V)\) denote the set of all 2-local derivations from \(\g\) to \(V\).
The main results are as follows.

\begin{theorem}\label{thm:semisimple}
Let \(\g\) be a finite-dimensional semisimple Lie algebra over an
algebraically closed field \(\F\) of characteristic zero and let \(V\) be
a finite-dimensional \(\g\)-module.  Every 2-local derivation
\(\Delta\colon\g\to V\) is a derivation.
\end{theorem}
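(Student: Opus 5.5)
Since \(\g\) is semisimple, Whitehead's first lemma gives \(H^1(\g,V)=0\). Hence every derivation \(D\colon\g\to V\) is inner, \(D(x)=xv\) for some \(v\in V\). A 2-local derivation is therefore a map \(\Delta\) such that for all \(x,y\in\g\) there is \(v_{x,y}\in V\) with \(\Delta(x)=xv_{x,y}\) and \(\Delta(y)=yv_{x,y}\). The goal is to produce one \(v\in V\) with \(\Delta(x)=xv\) for all \(x\).

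We may decompose \(V=V^\g\oplus V'\), where \(V'=\g V\) is the sum of nontrivial isotypic components; this uses complete reducibility (Weyl). The trivial part contributes nothing to \(xv\), so we may replace \(V\) by \(V'\) and assume \(V^\g=0\).

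\textbf{Step 1: fix a realizer on a regular element.} Choose a regular semisimple \(h\in\g\) with Cartan subalgebra \(\h=\mathfrak{c}_\g(h)\). Because \(V^\g=0\), \(h\) acts on \(V\) with kernel equal to the zero-weight space \(V_0\). This kernel is generally nonzero. Replace \(\Delta\) by \(\Delta-D_{h,\cdot}\) for a derivation agreeing at \(h\); 2-locality is preserved, and now \(\Delta(h)=0\). Then for every \(y\) we have \(\Delta(y)=yv\) with \(hv=0\), that is, \(v\in V_0\). So \(\Delta(y)\in y V_0\) for all \(y\).

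\textbf{Step 2: determine \(\Delta\) on \(\h\) and root vectors.} For \(y\in\h\) we get \(\Delta(y)=yV_0=0\). For a root vector \(e_\alpha\) we get \(\Delta(e_\alpha)=e_\alpha v_\alpha\) with \(v_\alpha\in V_0\).

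\textbf{Step 3: globalize.} The goal is a single \(w\in V_0\) such that \(\Delta(x)=xw\) for all \(x\). The natural approach is to pick a "generic" element \(x_0=\sum_\alpha e_\alpha\) (over all roots, or over simple roots together with their negatives). Set \(\Delta(x_0)=x_0w\) with \(w\in V_0\), and subtract \(xw\) so that \(\Delta(x_0)=0=\Delta(h)\).

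We then need to show that \(\Delta\) vanishes identically. For arbitrary \(y\), the realizer \(v\) of the pair \((y,x_0)\), taken after the shift by \(h\), is not automatically compatible with \(h\). So we should instead use pairs \((y,x_0)\) and \((y,h)\) separately:
\begin{itemize}
\item from \((y,h)\): \(\Delta(y)\in yV_0\);
\item from \((y,x_0)\): \(\Delta(y)\in y\ker(x_0)\).
\end{itemize}
Hence \(\Delta(y)\in yV_0\cap y\ker(x_0|_V)\).

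The key claim is that \(\ker(x_0|_V)\) is small, for instance \(\ker x_0\cap V_0=V^\g=0\). Heuristically this holds because a principal-type nilpotent-plus-negative element is regular. Taking \(x_0=e+f\) from a principal \(\mathfrak{sl}_2\) gives a regular semisimple element whose centralizer is a Cartan \(\h'\) in general position relative to \(\h\).

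This does not immediately give \(yv_1=yv_2\) with \(v_1\in V_0\), \(v_2\in V_0'\). The cleaner route is an argument of Ayupov–Kudaybergenov type:
\begin{itemize}
\item Show \(\Delta\) is homogeneous: \(\Delta(tx)=t\Delta(x)\), using the pair \((x,tx)\).
\item Show \(\Delta\) is additive on "generic" pairs by comparing values on \(x\), \(y\), \(x+y\) against the two fixed regular elements.
\item Use the density of regular semisimple elements in the Zariski topology to conclude.
\end{itemize}

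The main obstacle is Step 3: converting pointwise membership \(\Delta(y)\in yV_0\cap y\ker x_0\) into \(\Delta(y)=0\).

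The plan is to prove, for \(y\) in a Zariski-open dense set, that \(\{v\in V_0 : yv\in y\ker x_0\}\) is forced into \(\ker y+(\ker x_0\cap\ldots)\). This gives \(\Delta(y)=0\) on a dense set. We then extend to all \(y\) by a continuity-free trick: for any \(y\), pick a generic \(z\) and use the realizers on the pairs \((y,z)\) and \((y,h)\), together with the invariance \(\Delta(y)=yv\) with \(v\in V_0\), to deduce \(\Delta(y)=0\).

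I expect the linear-algebra claim, that generic \(y\) separates \(V_0\) from \(\ker x_0\), to require a weight-by-weight analysis. That analysis would be done via restriction to the principal \(\mathfrak{sl}_2\), where each summand's zero-weight space is at most one-dimensional and explicit.
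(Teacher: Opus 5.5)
Your reduction to \(V^\g=0\), the normalization \(\Delta(h)=0\), and the inclusion \(\Delta(y)\in yV_0\cap y\ker(x_0|_V)\) after killing \(\Delta\) at a second anchor are all sound; this is exactly \cref{lem:two-anchor}. One small correction: \(\ker(h|_V)=V_0\) requires \(h\) to be \(V\)-regular, i.e.\ \(\mu(h)\neq0\) for every nonzero weight \(\mu\) of \(V\). Root-regularity and \(V^\g=0\) do not give this.

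The real problem is Step 3, which carries the whole theorem but is only a plan, and both of its halves have gaps. First, \(V_0\cap\ker x_0=0\) does not imply \(yV_0\cap y\ker x_0=0\). From \(yu=yw\) with \(u\in V_0\) and \(w\in\ker x_0\), you can conclude \(u=w=0\) only if \(V_0+\ker x_0+K_y\) is a direct sum. That condition is open in \(y\), but you need an explicit witness to know it is nonempty, and you give none. With \(x_0=e+f\) from a principal \(\mathfrak{sl}_2\), it is not even clear dimensionally: \(\dim\ker(x_0|_V)\) equals the dimension of the zero eigenspace of the principal semisimple element, which contains \(V_0\) and is in general strictly larger. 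The paper avoids this by taking the second anchor to be \(\Ad(p)h_0\) with \(p=\exp E\), so that its kernel is \(pV_0\). It then proves by a weight argument that \(V_0+pV_0+qV_0\) is direct for \(q=\exp F\) (\cref{lem:three-spaces}). Hence \(U=\{g: V_0+pV_0+gV_0\ \text{direct}\}\) is nonempty and open. Vanishing at \(\Ad(g)h_0\), \(g\in U\), then spreads to the whole Cartan \(\Ad(g)\h\), and via dominance of \(G\times\h\to\g\) to a dense open set.

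Second, the extension from a dense set to all of \(\g\) cannot be left as an unspecified ``continuity-free trick.'' For nonsemisimple \(y\) you must produce two elements \(z,z'\) at which \(\Delta\) is already known to vanish, with \(yK_z\cap yK_{z'}=0\). For nilpotent \(y\) both images lie in the possibly small space \(yV\), and nothing in your plan shows that generic anchors separate. The paper first upgrades vanishing to every semisimple element (\cref{prop:all-semisimple}):
\begin{enumerate}
\item realizers attached to different \(V\)-regular elements agree, because their difference is killed by a nonempty open subset of \(\g\) and so lies in \(V^\g=0\);
\item each semisimple \(s\) is then paired with a \(V\)-regular element of a Cartan containing it.
\end{enumerate}
It does this precisely so that it can use the nongeneric semisimple anchors \(s+q_0\) and \(s+q_1\). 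These are built from a Jacobson--Morozov triple \((e,h,f)\subseteq\g^s\) attached to the Jordan decomposition \(y=s+e\), and \cref{lem:sl2-nilpotent-separation} gives \((s+e)K_{s+q_0}\cap(s+e)K_{s+q_1}=0\). The principal \(\mathfrak{sl}_2\) you propose is the wrong object here: the separation has to use an \(\mathfrak{sl}_2\) through the nilpotent part of the given \(y\), commuting with its semisimple part.

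Finally, the Ayupov--Kudaybergenov additivity route does not transfer. Their linearity argument rests on \(\kappa(\Delta(x),y)=-\kappa(x,\Delta(y))\) for the Killing form, which needs the values to lie in \(\g\) itself. A general module \(V\) carries no invariant pairing with \(\g\).
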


Write the reductive Lie algebra
\[
 \g=\s\oplus\z,\quad
 \s=[\g,\g],\quad \z=Z(\g),
\]
and let \(\prz\colon\g\to\z\) be the canonical projection.  For vector
spaces \(X,Y\), write
\[
 \Homog(X,Y)
 =\{H\colon X\to Y:H(\lambda x)=\lambda H(x)
   \text{ for all }\lambda\in\F,\ x\in X\}.
\]

\begin{theorem}\label{thm:reductive}
Let \(\g=\s\oplus\z\) be a finite-dimensional reductive Lie algebra over an
algebraically closed field \(\F\) of characteristic zero, and let \(V\) be a
finite-dimensional \(\g\)-module.  Then
\begin{equation}\label{eq:classification-intro}
 \operatorname{2LocDer}(\g,V)
 =
 \Der(\g,V)
 +
 \{H\circ\prz:H\in\Homog(\z,V^\g)\}.
\end{equation}
Moreover,
\begin{equation}\label{eq:intersection-intro}
 \Der(\g,V)\cap\{H\circ\prz:H\in\Homog(\z,V^\g)\}
=\{L\circ\prz:L\in\operatorname{Hom}_{\F}(\z,V^\g)\}.
\end{equation}
Equivalently, every such map admits the
unconditional formula
\[
 \Delta(s+z)= (s+z)u+f(z)+H(z),
\]
\[
\quad u\in \s V, \quad f\in \Der(\z,V^\s), \quad H\in\Homog(\z,V^\g).
\]
In particular,
\begin{equation}\label{eq:quotient-classification}
 \frac{\operatorname{2LocDer}(\g,V)}{\Der(\g,V)}
 \cong
 \frac{\Homog(\z,V^\g)}
 {\operatorname{Hom}_{\F}(\z,V^\g)}.
\end{equation}
\end{theorem}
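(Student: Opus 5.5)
The plan is to reduce everything to \cref{thm:semisimple} plus a careful analysis of the central directions. First I describe $\Der(\g,V)$ explicitly, which proves the ``unconditional formula'' for derivations and the identity \eqref{eq:intersection-intro}. Since $\z$ commutes with $\s$, the decomposition $V=V^\s\oplus\s V$ (the trivial $\s$-isotypic part and its complement) is a decomposition into $\g$-submodules.

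For a derivation $D$, I split $D=D_1+D_2$ according to this decomposition.
\begin{itemize}
\item On $\s V$: by Whitehead's lemma, $D_1|_\s=(\,\cdot\,)u$ for some $u\in\s V$. Then $D_1'=D_1-(\,\cdot\,)u$ vanishes on $\s$, and for $s\in\s$, $z\in\z$ we get $sD_1'(z)=D_1'([s,z])+zD_1'(s)=0$. Hence $D_1'(z)\in(\s V)^\s=0$.
\item On $V^\s$: $D_2$ vanishes on $[\s,\s]=\s$, and $D_2|_\z=f$ satisfies $z_1f(z_2)=z_2f(z_1)$, i.e.\ $f\in\Der(\z,V^\s)$.
\end{itemize}
This gives $D(s+z)=(s+z)u+f(z)$. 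A derivation of the form $H\circ\prz$ must then have $u=0$ and $f=H$ linear with values in $V^\g$, which is \eqref{eq:intersection-intro}. The quotient statement \eqref{eq:quotient-classification} follows formally once \eqref{eq:classification-intro} is proved.

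For the inclusion ``$\supseteq$'' in \eqref{eq:classification-intro}, take $H\in\Homog(\z,V^\g)$ and $x,y\in\g$. If $\prz x,\prz y$ are linearly independent, choose a linear $L\colon\z\to V^\g$ agreeing with $H$ on them. If they are dependent, use homogeneity of $H$ to choose $L$. Then $L\circ\prz$ is a derivation, because $z_1L(z_2)=0=z_2L(z_1)$ and $L\circ\prz$ kills $\s$. Adding an arbitrary derivation preserves 2-locality.

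The real work is ``$\subseteq$''. Let $\Delta$ be a 2-local derivation and compose with the projections onto $\s V$ and $V^\s$; these are module maps, so both components $\Delta_1,\Delta_2$ are again 2-local.

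For $\Delta_2$, every realizer has the form $D(s+z)=f(z)$. Using the pair $(s+z,z)$ shows $\Delta_2(s+z)=\Delta_2(z)$, so $\Delta_2=G\circ\prz$ with $G\colon\z\to V^\s$ a 2-local derivation of the abelian Lie algebra $\z$. Pairs give:
\begin{itemize}
\item homogeneity of $G$, from the pair $(z,\lambda z)$;
\item the relation $z_1G(z_2)=z_2G(z_1)$.
\end{itemize}
I then decompose $V^\s$ into generalized weight spaces for $\z$. On a summand with nonzero weight, every derivation of $\z$ is inner by a standard argument: pick $z_0$ acting invertibly and set $u=z_0^{-1}D(z_0)$. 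I expect the 2-local condition to force $G=(\,\cdot\,)u$ there, by testing against $z_0$. On the nilpotent-weight summand, the aim is to show that $G$ minus a suitable linear cocycle $f$ takes values in $V^\g$. Concretely, I would pick a basis $z_1,\dots,z_m$ of $\z$, define $f$ by linear extension of $G(z_i)$, check that $f$ is a cocycle using the symmetric relation, and prove $z(G-f)(z')=0$ by comparing realizers on pairs $(z',z_i)$.

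For $\Delta_1$, restriction to $\s$ is a 2-local derivation $\s\to\s V$, hence equal to $(\,\cdot\,)u$ by \cref{thm:semisimple}. Replacing $\Delta_1$ by $\Delta_1-(\,\cdot\,)u$, we get a 2-local map vanishing on $\s$ whose realizers are inner, $x\mapsto xu_{x,y}$, and we must show it vanishes identically. The pair $(s+z,y)$ with $y\in\s$ gives $\Delta_1(s+z)=(s+z)w$ with $yw=0$.

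I expect this to be the main obstacle. A single pair only kills $w$ under one element of $\s$, so one cannot directly conclude $w\in V^\s$. I would first try taking $y$ regular semisimple, with $y$ chosen depending on $x$ to be generic relative to $s$, so that $yw=0$ forces $w$ into the zero weight space. Combined with a second pair $(s+z,z)$ and a decomposition of $\s V$ into generalized $\z$-weight spaces, where nilpotent central action is the delicate case, this should force $(s+z)w=0$. If that fails, I would fall back on an argument in the style of \cite{YZ}: show that $x\mapsto\Delta_1(x)$ is linear on a Zariski-dense set of $x$ sharing a common realizer.
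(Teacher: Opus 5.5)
Your description of $\Der(\g,V)$, the inclusion ``$\supseteq$'', the identity \eqref{eq:intersection-intro}, and your handling of the $V^\s$-component are correct. Your basis-extension $f$ is just an explicit linear lift of $\overline\delta$ as in \cref{lem:abelian-classification}, and the weight decomposition of $V^\s$ is not needed there.

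The genuine gap is the step you flag yourself: proving that a 2-local $\Delta'\colon\g\to\s V$ with $\Delta'|_\s=0$ vanishes identically. This is the whole content of \cref{lem:core-rigidity}, and your proposal only offers expectations (``should force'', ``if that fails''). Even the easy cases are not carried out, though your tools reach them:
\begin{enumerate}
\item[(i)] The pairs $(s,z)$ give $\Delta'(z)\in zK_s\subseteq K_s$ for all $s\in\s$, hence $\Delta'(z)=0$. You need this before the pair $(x,z)$ tells you anything.
\item[(ii)] On a generalized $\z$-weight summand with $\chi\ne0$, the pair $(z_0,x)$ kills $\Delta'$.
\item[(iii)] For $x=a+z$ with $a$ semisimple, the pairs $(a,x)$ and $(z,x)$ give $\Delta'(x)\in zK_a\cap aK_z\subseteq K_a\cap\operatorname{im}a=0$.
\end{enumerate}
The hard case is $a=s+e$ with $e\ne0$ nilpotent and $z$ acting nilpotently but nontrivially. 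Then $(x,z)$ only gives $\Delta'(x)\in(s+e)K_z$, and the semisimple trick fails because kernel and image of $s+e$ overlap. A pair with a generic $V$-regular $y$ gives $\Delta'(x)\in(a+z)K_y$. But $y$ does not commute with $s$, so $K_y$ bears no relation to the $s$- and $e$-structure. You give no transversality statement showing that such subspaces, together with $(s+e)K_z$, meet only in $0$. The \cite{YZ}-style fallback does not close this either. It yields realization on a dense open set, as in \cref{prop:open-realization}. Passing from there to non-semisimple points is exactly what needs a separate argument, already when $\z=0$.

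The missing idea is \cref{lem:sl2-nilpotent-separation} with its extra commuting operator $T$. Take a Jacobson--Morozov triple $(e,h,f)$ in $\s^s$. Let $S$ be the action of $s$ and $T$ the action of $z$; both commute with $\mathfrak l_e=\Span\{e,h,f\}$ and with each other. The lemma provides semisimple $q_0,q_1\in\mathfrak l_e$ with
\[
(S+e+T)\ker(S+q_0)\cap(S+e+T)\ker(S+q_1)\cap(S+e)\ker T=0 .
\]
Since $s+q_i\in\s$ gives $\Delta'(s+q_i)=0$, and $\Delta'(z)=0$, the three pairs $(x,s+q_0)$, $(x,s+q_1)$, $(x,z)$ put $\Delta'(x)$ precisely into this triple intersection. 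A generic regular $y$ lacks what makes this work. The test elements $s+q_i$ commute with $s$, so their kernels are compatible with the joint $(S,\mathfrak l_e)$-isotypic decomposition. The factor $\ker T$ coming from $(x,z)$ then forces $Tm_0=Tm_1=0$ on each summand $M_{0,2k}\otimes L(2k)$, which neutralizes the nilpotent central action.
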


\begin{corollary}\label{cor:criterion}
Every 2-local derivation
\(\Delta: \g\to V\) is a derivation if and only if
\[
 \dim\z\leq1
 \qquad\text{or}\qquad
 V^\g=0.
\]
If \(V\) is completely reducible, every 2-local derivation has the form
\begin{equation}\label{eq:cr-form-intro}
 \Delta(s+z)=(s+z)v+H(z),
 \quad
 v\in V,\quad H\in\Homog(\z,V^\g).
\end{equation}
\end{corollary}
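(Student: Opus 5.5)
We derive the corollary directly from \cref{thm:reductive}, in particular from the quotient description \eqref{eq:quotient-classification} and the explicit formula given there.

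\emph{The criterion.} By \eqref{eq:quotient-classification}, every 2-local derivation is a derivation if and only if $\Homog(\z,V^\g)=\operatorname{Hom}_{\F}(\z,V^\g)$. So it suffices to decide when every homogeneous map $\z\to V^\g$ is linear.

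If $V^\g=0$, both spaces are zero. If $\dim\z=0$ there is nothing to show. If $\dim\z=1$, write $\z=\F z_0$. A homogeneous $H$ satisfies $H(\lambda z_0)=\lambda H(z_0)$, so $H$ is linear.

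Conversely, suppose $\dim\z\ge2$ and $V^\g\ne0$. Pick $0\ne w\in V^\g$ and two linearly independent coordinate functionals $\phi_1,\phi_2$ on $\z$. Set
\[
H(z)=\frac{\phi_1(z)^2}{\phi_1(z)+\phi_2(z)}\,w
\]
when $\phi_1(z)+\phi_2(z)\neq0$, and $H(z)=0$ otherwise. This $H$ is homogeneous of degree one, since both the formula and the zero set of the denominator respect scaling. It is not linear. For instance, if $z_1,z_2$ are dual to $\phi_1,\phi_2$, then $H(z_1)=w$, $H(z_2)=0$, but $H(z_1+z_2)=\tfrac12 w\ne H(z_1)+H(z_2)$. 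Then $H\circ\prz$ is a 2-local derivation by \eqref{eq:classification-intro}. It is not a derivation, because by \eqref{eq:intersection-intro} it would otherwise equal $L\circ\prz$ with $L$ linear, forcing $H=L$ on $\z$.

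\emph{The completely reducible case.} Start from the unconditional formula $\Delta(s+z)=(s+z)u+f(z)+H(z)$ with $u\in\s V$, $f\in\Der(\z,V^\s)$ and $H\in\Homog(\z,V^\g)$. We must absorb $f$ into an inner derivation plus a linear map into $V^\g$; the latter is harmless, since it can be added to $H$ and the sum stays homogeneous.

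Since $V$ is a completely reducible $\g$-module, so is $V^\s$ as a $\z$-module. The abelian algebra $\z$ acts on it by commuting semisimple operators, so $V^\s$ splits into weight spaces $V^\s_\mu$ for $\mu\in\z^*$, with $V^\s_0=V^\g$. Decompose $f=\sum_\mu f_\mu$ accordingly.

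The derivation condition on $\z$ reads $z f(z')=z' f(z)$, since $[z,z']=0$. Fix $\mu\ne0$ and choose $z'$ with $\mu(z')\ne0$. Define $v_\mu=\mu(z')^{-1}f_\mu(z')$. Applying the weight-$\mu$ component of the condition gives $\mu(z)f_\mu(z')=\mu(z')f_\mu(z)$, hence $f_\mu(z)=z v_\mu$ for all $z$. The zero-weight component $f_0$ is a linear map $\z\to V^\g$.

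Now set $v=u+\sum_{\mu\ne0}v_\mu$ and $H'=H+f_0$. Because each $v_\mu\in V^\s$, we have $s v_\mu=0$ for $s\in\s$. Therefore
\[
\Delta(s+z)=(s+z)v+H'(z),
\]
which is \eqref{eq:cr-form-intro}.

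The only point needing care is this weight-space argument. Without complete reducibility, $\z$ could act non-semisimply on $V^\s$, and $f$ need not be inner; this is exactly why the general formula keeps the term $f\in\Der(\z,V^\s)$.
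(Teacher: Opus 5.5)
Your proof is correct. The criterion half is essentially the paper's argument: your $\phi_1^2/(\phi_1+\phi_2)$ plays the role of the paper's $\alpha^2/\beta$, and passing through \eqref{eq:quotient-classification} instead of \eqref{eq:classification-intro} is cosmetic.

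For the completely reducible case you take a different route. The paper splits $V$ into simple $\g$-summands. It invokes vanishing of first cohomology on the nontrivial summands to conclude that every derivation is $(s+z)\mapsto(s+z)v+L(z)$ with $L$ linear into $V^\g$. It then absorbs $L$ into $H$. You instead start from the unconditional formula $\Delta(s+z)=(s+z)u+f(z)+H(z)$ and solve $zf(z')=z'f(z)$ by hand on the $\z$-weight spaces of $V^\s$.

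One step deserves a line of justification: $V^\s$ is a $\g$-submodule of $V$ on which $\s$ acts trivially. Its $\g$-submodules are therefore its $\z$-submodules, so complete reducibility makes $\z$ act diagonalizably on it.

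Your version is self-contained. For a one-dimensional summand on which $\s$ acts trivially and $\z$ by a nonzero character, the paper's cohomology claim is not Whitehead's lemma but exactly your weight computation. Your version also proves more. The $\s V$-component is already inner by \cref{lem:core-rigidity}, so you only use that $\z$ acts semisimply on $V^\s$. Hence \eqref{eq:cr-form-intro} holds under that weaker hypothesis, even when $\z$ acts non-semisimply on $\s V$. In exchange, the paper's route is shorter and yields a description of $\Der(\g,V)$ itself for completely reducible $V$, independent of the 2-local classification.
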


Let $V$ be a finite-dimensional $\mathfrak g$-module. For \(v\in V\), define
\[
  D_v\colon\g\longrightarrow V,
  \quad x\longmapsto xv.
\]
We shall repeatedly use the elementary fact that
\(\Delta-D_v\) is 2-local whenever \(\Delta\) is 2-local.
The following lemma is easy to understand.

\begin{lemma}\label{lem:direct-sum-reduction}
Suppose that \(V=\bigoplus_{i=1}^rV_i\) is a direct sum of
\(\g\)-submodules, and let \(\pi_i\colon V\to V_i\) be the projections.  If
\(\Delta\colon\g\to V\) is 2-local, then
\(\Delta_i=\pi_i\circ\Delta\colon\g\to V_i\) is 2-local for every \(i\).  If every
\(\Delta_i\) is a derivation, then so is \(\Delta\).
\end{lemma}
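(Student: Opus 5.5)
The plan is to verify both assertions directly from the definitions. The only ingredient is that composing a derivation with a \(\g\)-module map gives a derivation, together with the identity \(\sum_i \pi_i=\id_V\).

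\emph{First assertion.} Each projection \(\pi_i\colon V\to V_i\) is a \(\g\)-module homomorphism, because the decomposition \(V=\bigoplus_i V_i\) is into \(\g\)-submodules. So for any derivation \(D\in\Der(\g,V)\), the map \(\pi_i\circ D\) is linear, and
\[
\pi_i D([x,y])=\pi_i\bigl(xD(y)-yD(x)\bigr)=x\,\pi_iD(y)-y\,\pi_iD(x),
\]
so \(\pi_i\circ D\in\Der(\g,V_i)\). Now fix \(x,y\in\g\) and take the derivation \(D_{x,y}\) that realizes \(\Delta\) at \(x\) and \(y\). Then \(\pi_i\circ D_{x,y}\) realizes \(\Delta_i\) at \(x\) and \(y\). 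Since \(x,y\) were arbitrary, \(\Delta_i\) is 2-local.

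\emph{Second assertion.} Since \(\sum_i\pi_i=\id_V\), we have \(\Delta=\sum_i \iota_i\circ\Delta_i\), where \(\iota_i\colon V_i\hookrightarrow V\) is the inclusion. Each inclusion \(\iota_i\) is also a module map. So if every \(\Delta_i\) is a derivation, each \(\iota_i\circ\Delta_i\) is a derivation \(\g\to V\), by the same computation as above. Finally, \(\Der(\g,V)\) is a vector space, so the sum \(\Delta\) is a derivation.

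The only point that needs care is that linearity of \(\Delta\) is not assumed in advance. It comes for free here: \(\Delta\) equals a finite sum of linear maps \(\iota_i\circ\Delta_i\). There is no further obstacle.
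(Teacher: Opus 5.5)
Your proof is correct and complete. The paper gives no proof of this lemma (it only says it is ``easy to understand''), and your argument is exactly the routine check it leaves to the reader: \(\pi_i\) and \(\iota_i\) are module maps, so \(\pi_i\circ D_{x,y}\) realizes \(\Delta_i\) at \(x,y\), and \(\Delta=\sum_i\iota_i\circ\Delta_i\) is a sum of derivations.
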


\section{Affine realizers and reductions}

Throughout Sections~2--4, $\mathfrak g$ denotes a
finite-dimensional semisimple Lie algebra over $\F$.
We return to the general reductive decomposition
$\mathfrak g=\mathfrak s\oplus\mathfrak z$ in Section~6.

The first Whitehead lemma gives \(H^1(\g,V)=0\), and therefore
\[
  \Der(\g,V)=\{D_v:v\in V\},
\]
where \(v\) is unique modulo \(V^\g\).
For \(x\in\g\), put
\[
  K_x=\ker(x|_V)
  \quad\text{and}\quad
  \mathcal R_x(\Delta)=\{v\in V:xv=\Delta(x)\}.
\]
The map \(\Delta\colon\g\to V\) is a 2-local
derivation if and only if
\[
  \mathcal R_x(\Delta)\cap\mathcal R_y(\Delta)\ne\varnothing, \quad \forall x,y\in\g.
\]
Whenever these conditions hold, each \(\mathcal R_x(\Delta)\) is a nonempty
affine space of the form
\[
  \mathcal R_x(\Delta)=v_x+K_x.
\]
Moreover, \(\Delta=D_v\) for some \(v\in V\) if and only if
\[
  v\in \bigcap_{x\in\g}\mathcal R_x(\Delta)\ne\varnothing.
\]

\begin{lemma}\label{lem:two-anchor}
Let \(\Delta\colon\g\to V\) be a 2-local derivation and suppose that
\(\Delta(a)=\Delta(b)=0\).  Then
\[
  \Delta(x)\in xK_a\cap xK_b, \quad x\in\g.
\]
In particular, if \(K_a+ K_b+ K_x\) is a direct sum, then \(\Delta(x)=0\).
\end{lemma}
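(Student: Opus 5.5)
The plan is to use the affine description of realizers from the preceding discussion: for \(x,y\in\g\) we have \(\mathcal R_x(\Delta)\cap\mathcal R_y(\Delta)\ne\varnothing\), and every derivation has the form \(D_v\) by the Whitehead lemma. First take the pair \((a,x)\). There is a \(v\in V\) with \(av=\Delta(a)=0\) and \(xv=\Delta(x)\). Then \(v\in K_a\), so \(\Delta(x)=xv\in xK_a\). Applying the same argument to the pair \((b,x)\) gives \(\Delta(x)\in xK_b\), and hence \(\Delta(x)\in xK_a\cap xK_b\). This settles the first claim with no further input.

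For the second claim, assume \(K_a+K_b+K_x\) is direct. By the first part we can write \(\Delta(x)=xk_a=xk_b\) with \(k_a\in K_a\) and \(k_b\in K_b\). Then \(x(k_a-k_b)=0\), so \(k_a-k_b\in K_x\). Set \(k_x=k_a-k_b\); this gives \(k_a-k_b-k_x=0\) with each term lying in its own summand. Directness of the sum forces \(k_a=k_b=k_x=0\), so \(\Delta(x)=xk_a=0\).

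No step here is a genuine obstacle. The only care needed is to note that the realizer for the pair \((a,x)\) lies in \(K_a\) exactly because \(\Delta(a)=0\). A second point is to see that the directness hypothesis is used on the three-term relation \(k_a-k_b-k_x=0\), not merely on \(K_a\cap K_b=0\). The latter alone would not be enough, because \(k_a\) and \(k_b\) need only agree modulo \(K_x\).
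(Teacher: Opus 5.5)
Your proof is correct and follows essentially the same route as the paper's: the realizers for the pairs \((a,x)\) and \((b,x)\) lie in \(K_a\) and \(K_b\), their difference lies in \(K_x\), and directness of \(K_a+K_b+K_x\) forces both realizers to vanish. Your remark that \(K_a\cap K_b=0\) alone would not suffice is also accurate.
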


\begin{proof}
Apply the 2-local property to \((a,x)\).  There is \(u\in V\) such that
\[
  0=\Delta(a)=au,
  \qquad
  \Delta(x)=xu.
\]
Thus \(u\in K_a\).  Similarly, there is \(w\in K_b\) with
\(\Delta(x)=xw\), proving 
\[
  \Delta(x)\in xK_a\cap xK_b, \quad x\in\g.
\]
If \(K_a+ K_b+ K_x\) is a direct sum, then \(u-w\in K_x\), \(u\in K_a\) and \(-w\in K_b\),
which forces \(u=w=0\), and hence \(\Delta(x)=0\).
\end{proof}

We record the standard reductions needed later.

\begin{lemma}\label{lem:faithful-quotient}
Let \(V\) be an irreducible nontrivial \(\g\)-module, and
\[
  \operatorname{Ann}_\g V=\{x\in\g:xV=0\}.
\]
Every 2-local derivation \(\Delta\colon\g\to V\) factors through a 2-local
derivation
\[
  \overline\Delta\colon\g/\operatorname{Ann}_\g V\longrightarrow V,
\]
where the quotient \(\g/\operatorname{Ann}_\g V\) is semisimple and acts faithfully on \(V\).
\end{lemma}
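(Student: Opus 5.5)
Let \(\mathfrak a=\operatorname{Ann}_\g V\) and let \(\pi\colon\g\to\g/\mathfrak a\) be the quotient map. The plan is to show that \(\Delta\) vanishes on \(\mathfrak a\) and is constant on cosets of \(\mathfrak a\), so that \(\overline\Delta(\pi(x))=\Delta(x)\) is well defined. We then check that \(\overline\Delta\) inherits 2-locality. The structural claims about the quotient are standard.

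\textbf{Structural facts.} Since \(\mathfrak a\) is an ideal of the semisimple algebra \(\g\), we have \(\g=\mathfrak a\oplus\mathfrak a^\perp\) with respect to the Killing form. Here \(\mathfrak a^\perp\) is an ideal, and \(\g/\mathfrak a\cong\mathfrak a^\perp\) is semisimple. Because \(\mathfrak a\) acts by zero, \(V\) is naturally a \(\g/\mathfrak a\)-module, and it is faithful by the definition of \(\mathfrak a\). Since \(V\) is nontrivial, \(\mathfrak a\neq\g\), so the quotient is nonzero. Irreducibility is preserved, although only faithfulness is needed here.

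\textbf{Factoring \(\Delta\).} Fix \(x\in\g\) and \(a\in\mathfrak a\). Applying the 2-local property to the pair \((x,x+a)\) gives \(u\in V\) with \(\Delta(x)=xu\) and \(\Delta(x+a)=(x+a)u=xu+au=xu\), since \(au=0\). Hence \(\Delta(x+a)=\Delta(x)\), and \(\overline\Delta\) is well defined. Taking \(x=0\) also gives \(\Delta(a)=\Delta(0)=0\), because \(\Delta(0)=D(0)=0\) for any derivation \(D\).

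\textbf{2-locality of \(\overline\Delta\).} Derivations of \(\g/\mathfrak a\) into \(V\) pull back bijectively to derivations of \(\g\) into \(V\) that vanish on \(\mathfrak a\). By the first Whitehead lemma every derivation \(\g\to V\) has the form \(D_v\), and every \(D_v\) vanishes on \(\mathfrak a\), so each such derivation descends to \(\g/\mathfrak a\). Now, given \(\bar x,\bar y\in\g/\mathfrak a\), choose lifts \(x,y\) and the realizer \(D_{x,y}=D_v\). The induced map \(\bar z\mapsto \bar z v\) is a derivation of \(\g/\mathfrak a\) into \(V\) that agrees with \(\overline\Delta\) at \(\bar x\) and \(\bar y\).

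None of these steps is a genuine obstacle. The only point requiring care is that realizers descend to the quotient, and this is exactly where the Whitehead description \(\Der(\g,V)=\{D_v\}\) is used.
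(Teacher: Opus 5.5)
Your proposal is correct and follows essentially the same route as the paper: you apply 2-locality at $(x,x+a)$ together with $\Der(\g,V)=\{D_v:v\in V\}$ to show $\Delta$ is constant on cosets of $\operatorname{Ann}_\g V$, let the realizers $D_v$ descend to the quotient, and get semisimplicity of the quotient from the annihilator being an ideal. Your Killing-form complement argument and the remark that the quotient is nonzero simply spell out details the paper leaves implicit.
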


\begin{proof}
Every derivation from \(\g\) to \(V\) is of the form \(D_v\) and therefore vanishes on
\(\operatorname{Ann}_\g V\).  For \(x_0\in\operatorname{Ann}_\g V\), applying 2-locality to
\((x,x+x_0)\), we have \(\Delta(x+x_0)=\Delta(x)\).  Thus
\(\overline\Delta(x+\operatorname{Ann}_\g V)=\Delta(x)\) is well defined.  A derivation
realizing two values of \(\Delta\) factors through \(\g/\operatorname{Ann}_\g V\), which
proves 2-locality of \(\overline\Delta\).  Since \(\operatorname{Ann}_\g V\) is an ideal
of the semisimple Lie algebra \(\g\), the quotient is semisimple.
\end{proof}

By complete reducibility and \cref{lem:direct-sum-reduction,lem:faithful-quotient}, it is enough to prove
\cref{thm:semisimple} when \(V\) is irreducible, nontrivial, and faithful.
These assumptions remain in force through \cref{sec:proof-of-thm1.1}.

Let \(G\) be a simply connected semisimple algebraic
group with Lie algebra \(\g\).  The representation of \(\g\) on \(V\)
integrates uniquely to a rational representation of \(G\).
Fix a Cartan subalgebra \(\tori\subseteq\g\), and write
\[
  V=\bigoplus_{\mu\in\tori^*}V_\mu.
\]
Choose a system \(\Pi\) of simple roots and put
\[
 Q_+=\sum_{\alpha\in\Pi}\mathbb Z_{\geq0}\alpha,
 \qquad V_+=\bigoplus_{\mu\in Q_+\setminus\{0\}}V_\mu,
 \qquad V_-=\bigoplus_{\mu\in -Q_+\setminus\{0\}}V_\mu.
\]
Choose nonzero root vectors
\[
  e_\alpha\in\g_\alpha,\qquad
  f_\alpha\in\g_{-\alpha},
  \qquad \alpha\in\Pi.
\]
Set
\[
  E=\sum_{\alpha\in\Pi}e_\alpha,
  \qquad
  F=\sum_{\alpha\in\Pi}f_\alpha,
  \qquad
  p=\exp_G(E),
  \qquad
  q=\exp_G(F).
\]
An element \(h\in\tori\) is called \emph{\(V\)-regular relative to
\(\tori\)} if
\[
  K_h=\ker(h|_V)= V_0.
\]
These elements form a nonempty Zariski-open dense subset of \(\tori\).

\begin{lemma}\label{lem:three-spaces}
The sum \(V_0+ pV_0+ qV_0\) is direct.
\end{lemma}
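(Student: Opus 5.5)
We will show that any relation \(v_0+pv_1+qv_2=0\) with \(v_0,v_1,v_2\in V_0\) forces \(v_0=v_1=v_2=0\). The idea is to compare weight components with respect to the \(Q_+\)-grading. Since \(E\) is a sum of positive simple root vectors and acts nilpotently on \(V\), we have
\[
  p=\exp(E)=\sum_{k\ge0}E^k/k!,
\]
so for \(v\in V_0\) we get \(pv=v+Ev+\tfrac12E^2v+\cdots\). Here \(E^kv\) has weight components of height exactly \(k\) in \(Q_+\), so \(pv\in v+V_+\). Symmetrically, \(qv\in v+V_-\). Thus
\[
  pV_0\subseteq\{v+w: v\in V_0,\ w\in V_+\}, \qquad qV_0\subseteq\{v+w: v\in V_0,\ w\in V_-\}.
\]

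Suppose \(v_0+pv_1+qv_2=0\). We decompose \(V=V_-\oplus V_0\oplus V_+\oplus V'\), where \(V'\) collects the weights neither in \(Q_+\) nor in \(-Q_+\); \(V'\) plays no role. Projecting onto the three pieces gives:
\begin{itemize}[label={}]
\item \(V_0\)-component: \(v_0+v_1+v_2=0\);
\item \(V_+\)-component: \(\sum_{k\ge1}E^kv_1/k!=0\);
\item \(V_-\)-component: \(\sum_{k\ge1}F^kv_2/k!=0\).
\end{itemize}
Isolating the height-one part of the \(V_+\)-component gives \(Ev_1=0\). Likewise the \(V_-\)-component gives \(Fv_2=0\).

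The key step is to show that \(Ev=0\) with \(v\in V_0\) forces \(v=0\). We use the principal \(\mathfrak{sl}_2\)-triple \((E,H_0,F')\) containing \(E\), where \(H_0=2\rho^\vee\in\tori\) and \(F'=\sum_\alpha c_\alpha f_\alpha\) with suitable nonzero coefficients (Jacobson--Morozov/Kostant). A vector \(v\in V_0\) has \(H_0\)-eigenvalue \(0\). In a finite-dimensional \(\mathfrak{sl}_2\)-module, a vector of \(H_0\)-weight \(0\) killed by \(E\) is a highest weight vector of weight \(0\), so it spans a trivial submodule and is also killed by \(F'\). Hence \(v\) is annihilated by all \(e_\alpha\) and, through \(F'=\sum c_\alpha f_\alpha\) with weights separated, by every \(f_\alpha\) individually. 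This last point holds because \(F'v\) has components in the distinct weight spaces \(V_{-\alpha}\), so each \(f_\alpha v=0\). Knowing \(Ev=0\) likewise gives \(e_\alpha v=0\) for each \(\alpha\), since the components lie in distinct weight spaces. Since the \(e_\alpha,f_\alpha\) generate \(\g\), \(v\in V^\g\). Because \(V\) is irreducible and nontrivial, \(V^\g=0\), so \(v=0\).

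Applying this to \(v_1\) and, symmetrically via the triple containing \(F\), to \(v_2\), we get \(v_1=v_2=0\). The \(V_0\)-component relation then gives \(v_0=0\), so the sum is direct. The main obstacle is the \(\mathfrak{sl}_2\) step. That step is also where we need the normalization of the coefficients in \(F'\), or else an argument directly using \(\mathfrak{sl}_2\)-theory for the triple through \(E\). Only the existence of some \(\mathfrak{sl}_2\)-triple \((E,2\rho^\vee,F')\) is needed, and that is standard.
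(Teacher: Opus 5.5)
Your proof is correct and follows essentially the paper's argument: you project the relation onto $V_+$ and $V_-$ to get $Ev_1=0$ and $Fv_2=0$, and then use that a zero-weight vector killed by all $e_\alpha$ (resp.\ all $f_\alpha$) must vanish in an irreducible nontrivial module. The only difference is in that last step. The paper gets it in one line from highest-weight theory, since such a vector would be a highest weight vector of weight $0$ and force $V$ to be trivial. You instead pass through the principal $\mathfrak{sl}_2$-triple $(E,2\rho^\vee,F')$ to show the vector lies in $V^\g=0$, which is valid but slightly longer.
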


\begin{proof}
Suppose
\[
  v_0+pv_++qv_-=0,
  \quad v_0,v_+,v_-\in V_0,
\]
then
\[
  V_+\ni(p-\id)v_+= 0= (q-\id)v_-\in V_-.
\]
Since \(E\) is nilpotent, we have
\[
  v_+\in \ker((p-\id)|_{V_0})=\ker E|_{V_0},
\]
which implies that \(e_\alpha v_+=0\), \(\forall\alpha\in \Pi\).
Irreducibility of the nontrivial module $V$ gives
\(v_+=0\). Similarly, \(v_-=0\), and then \(v_0=0\).
\end{proof}

\begin{remark}\label{rem:open-transversality}
For any \(V\)-regular \(h_0\in\tori\), equivariance gives
\[
  K_{\Ad(g)h_0}=gK_{h_0}=gV_0, \quad g\in G.
\]
Consider the orbit morphism
\[
  \kappa\colon G\longrightarrow\Gr(\dim V_0,V),
  \qquad g\longmapsto gV_0.
\]
Since
\[
  \Omega=\{W\in\Gr(\dim V_0,V):W\cap (V_0\oplus pV_0)=0\}
\]
is Zariski open, so is
\[
  U:=\{g\in G:V_0+pV_0+gV_0\text{ is direct}\}
    =\kappa^{-1}(\Omega).
\]
Moreover, \(U\) is nonempty and dense in the irreducible variety \(G\).
\end{remark}

\section{The semisimple part}

\begin{proposition}\label{prop:open-realization}
Let \(h_0\in\tori\) be \(V\)-regular relative to \(\tori\), and
\(\Delta\colon\g\to V\) a 2-local derivation.  There exist \(v_{h_0}\in V\)
and a nonempty Zariski-open subset \(\Omega_{h_0}\subseteq\g\) such that
\(\Delta(x)=xv_{h_0}\), \(x\in \{h_0\}\cup\Omega_{h_0}\).
\end{proposition}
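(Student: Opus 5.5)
We normalize so that two anchors are killed, then use \cref{lem:two-anchor} together with the open transversality of \cref{rem:open-transversality}.

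\textbf{Step 1: normalization.} Put \(h_1=\Ad(p)h_0\). Since \(K_{h_0}=V_0\) and \(K_{h_1}=pV_0\), this sum is direct by \cref{lem:three-spaces}. By 2-locality applied to the pair \((h_0,h_1)\), there is \(v_{h_0}\in V\) with
\[
\Delta(h_0)=h_0v_{h_0},\qquad \Delta(h_1)=h_1v_{h_0}.
\]
Replace \(\Delta\) by \(\Delta'=\Delta-D_{v_{h_0}}\). This map is still 2-local and satisfies \(\Delta'(h_0)=\Delta'(h_1)=0\). It therefore suffices to find a nonempty Zariski-open \(\Omega_{h_0}\subseteq\g\) on which \(\Delta'\) vanishes.

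\textbf{Step 2: the good set.} Let \(U\subseteq G\) be the nonempty open set of \cref{rem:open-transversality}, so \(g\in U\) means \(V_0+pV_0+gV_0\) is direct. For \(x=\Ad(g)h\) with \(g\in U\) and \(h\in\tori\) \(V\)-regular, we have \(K_x=gK_h=gV_0\). Thus \(K_{h_0}+K_{h_1}+K_x\) is direct, and \cref{lem:two-anchor} with \(a=h_0\), \(b=h_1\) gives \(\Delta'(x)=0\). Hence \(\Delta'\) vanishes on
\[
S=\{\Ad(g)h:\ g\in U,\ h\in\tori_{\mathrm{reg}}^V\},
\]
where \(\tori_{\mathrm{reg}}^V\) is the open dense set of \(V\)-regular elements, intersected with the regular semisimple elements of \(\tori\).

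\textbf{Step 3 (the main point): \(S\) contains a nonempty Zariski-open subset of \(\g\).} Consider the morphism
\[
\phi\colon G\times\tori\to\g,\qquad (g,h)\mapsto\Ad(g)h.
\]
It is dominant, since regular semisimple elements are dense and conjugate into \(\tori\). The set \(U\times\tori_{\mathrm{reg}}^V\) is open dense in \(G\times\tori\), so by Chevalley's theorem its image \(S\) is constructible and dense. A dense constructible set contains a nonempty open subset of its closure \(\g\). We take this open set as \(\Omega_{h_0}\).

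As an alternative that avoids Chevalley, note that \(d\phi\) is surjective at \((g,h)\) whenever \(h\) is regular, because \(\g=\tori\oplus[\g,h]\). Hence \(\phi\) is smooth there, and therefore open on that locus, so the image of the open set is open. Either route gives \(\Delta'=0\) on \(\{h_0\}\cup\Omega_{h_0}\), which is exactly \(\Delta(x)=xv_{h_0}\) there.

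The only delicate point is this openness or constructibility of the image. We handle it by standard algebraic-geometry facts, since all sets involved are defined by algebraic conditions.
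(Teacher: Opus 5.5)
Your proof is correct and follows the paper's argument. It uses the same normalization at the pair $(h_0,\Ad(p)h_0)$, the same application of \cref{lem:two-anchor} over the transversal set $U$, and the same dominance-plus-Chevalley step for $(g,h)\mapsto\Ad(g)h$. The one difference is small: the paper first gets $\Delta'(\Ad(g)h_0)=0$ and then uses a second 2-locality step at $(\Ad(g)h_0,\Ad(g)h)$ to reach every $h\in\tori$, whereas you apply \cref{lem:two-anchor} directly to each $\Ad(g)h$ with $h$ $V$-regular, since $K_{\Ad(g)h}=gV_0$. Restricting to a dense open subset of $\tori$ costs nothing for the Chevalley argument, and your smoothness alternative even shows that the image itself is open.
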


\begin{proof}
Applying 2-locality to the pair \((h_0, \Ad(p)h_0)\), there is
\(v_{h_0}\in V\) such that
\[
  \Delta(h_0)=h_0v_{h_0},\qquad
  \Delta(\Ad(p)h_0)=(\Ad(p)h_0)v_{h_0}.
\]
Set \(\Delta'=\Delta-D_{v_{h_0}}\), then
\[
\Delta'(h_0)=\Delta'(\Ad(p)h_0)=0.
\]
For \(g\in U\), then
\[
  K_{h_0}+ K_{\Ad(p)h_0}+ K_{\Ad(g)h_0}
  =V_0+ pV_0+ gV_0
\]
is direct, which implies that \(\Delta'(\Ad(g)h_0)=0\) by \Cref{lem:two-anchor}.
For \(h\in\tori\), a derivation realizing \(\Delta'\) at
\(\Ad(g)h_0\) and \(\Ad(g)h\) has the form \(D_{v_h}\). Then
\[
  v_h\in K_{\Ad(g)h_0}=gV_0.
\]
Since \(hV_0=0\), it follows that
\[
  \Delta'(\Ad(g)h)=0, \quad g\in U,\quad h\in\tori.
\]

Consider the adjoint-action morphism
\[
\Psi\colon G\times\tori\longrightarrow\g, \qquad (g,h)\longmapsto\Ad(g)h.
\]
The image of \(\Psi\) contains the regular semisimple locus, which is nonempty and Zariski open in \(\g\). Therefore the morphism \(\Psi\) is dominant, and its restriction to the dense open subset \(U\times\tori\) is dominant. By Chevalley's theorem, \(\Psi(U\times\tori)\) contains a nonempty Zariski-open subset \(\Omega_{h_0}\subseteq\g\) as required.
\end{proof}

\begin{proposition}\label{prop:all-semisimple}
For \(\Delta\in \operatorname{2LocDer}(\g,V)\), there exists \(v\in V\) such that \(\Delta(s)=sv\) for every semisimple element \(s\in \g\).
\end{proposition}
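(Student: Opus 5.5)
We apply \cref{prop:open-realization} to a fixed $V$-regular $h_0\in\tori$. This gives $v:=v_{h_0}$ and a nonempty Zariski-open set $\Omega:=\Omega_{h_0}\subseteq\g$ with $\Delta(x)=xv$ for $x\in\{h_0\}\cup\Omega$. Replacing $\Delta$ by $\Delta'=\Delta-D_v$, which is still 2-local, we may assume $\Delta$ vanishes on $\{h_0\}\cup\Omega$. The goal is then to show $\Delta(s)=0$ for every semisimple $s$.

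\emph{Step 1: regular semisimple elements.} Let $x$ be any element with $K_x$ small, for instance $x=\Ad(g)h$ with $h$ $V$-regular in $\tori$, so that $K_x=gV_0$. Since $\Omega$ and the set of $y$ with $K_y=\Ad$-conjugate of $V_0$ are dense open, I would pick two elements $a,b\in\Omega$ that are $V$-regular semisimple with $K_a+K_b+K_x$ direct. Such elements exist by the transversality argument of \cref{lem:three-spaces} and \cref{rem:open-transversality}: the condition that $K_a+K_b+K_x$ be direct is Zariski open in $(a,b)$ on conjugates of $\tori$, nonempty because it can be arranged by conjugating the triple $(h_0,\Ad(p)h_0,\Ad(q)h_0)$ appropriately, and it meets the dense open set $\Omega\times\Omega$. \cref{lem:two-anchor} then gives $\Delta(x)=0$. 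This handles every $x$ conjugate to a $V$-regular element of $\tori$.

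\emph{Step 2: arbitrary semisimple $s$.} Conjugate $s$ into a Cartan subalgebra, so that $s=\Ad(g)h$ with $h\in\tori$. Take $x=\Ad(g)h_1$ with $h_1\in\tori$ $V$-regular; Step 1 gives $\Delta(x)=0$. Applying 2-locality to $(x,s)$ yields some $u$ with $xu=0$ and $\Delta(s)=su$. Then $u\in K_x=gV_0$, and $s$ kills $gV_0$ because $hV_0=0$. Hence $\Delta(s)=0$, exactly as in the proof of \cref{prop:open-realization}.

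The main obstacle is Step 1. There I have to justify that, for a given regular $x$, the triple-transversality condition can be met with $a,b$ inside $\Omega$ rather than only at the specific anchors $h_0,\Ad(p)h_0$. My first attempt would be this. Translate by $g^{-1}$ so that $x$ becomes $h_1$, with $K_{h_1}=V_0$. The set of pairs $(g_1,g_2)\in G\times G$ such that $V_0+g_1V_0+g_2V_0$ is direct is open, and it is nonempty since it contains $(p,q)$ by \cref{lem:three-spaces}. Intersecting it with the dense open preimage of $g^{-1}\Omega$ under the map $(g_1,g_2)\mapsto(\Ad(g_1)h_0,\Ad(g_2)h_0)$ leaves a nonempty set by irreducibility of $G\times G$. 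That preimage is nonempty because $\Ad(G)h_0$ is not contained in the proper closed complement of $\Omega$. This last point needs care, since a single orbit is not dense. I would therefore vary $h_0$ over $V$-regular elements of $\tori$ and use the density of $\Ad(G)\tori^{\mathrm{reg}}$ in $\g$ to find a suitable anchor.
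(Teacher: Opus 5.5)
Your proposal is correct, but it reaches the key intermediate claim by a different route than the paper. That claim is that $\Delta-D_v$ vanishes at every $V$-regular semisimple element; your Step 2 then coincides with the paper's final step.

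The paper applies \cref{prop:open-realization} to \emph{every} $V$-regular anchor $h_0$, in every Cartan subalgebra, and glues the results. Since $\Omega_{h_0}\cap\Omega_{h_0'}\neq\varnothing$, the linear map $y\mapsto y(v_{h_0}-v_{h_0'})$ vanishes on a nonempty open set, hence identically. So $v_{h_0}-v_{h_0'}\in V^\g=0$. Each $h_0'$ lies in its own realized set $\{h_0'\}\cup\Omega_{h_0'}$, so $\Delta(h_0')=h_0'v$ comes for free.

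You instead apply \cref{prop:open-realization} once and propagate the vanishing from $\Omega$ through \cref{lem:two-anchor}. This forces a second, simultaneous genericity argument: you need transversality and $a,b\in\Omega$ at the same time. You correctly noticed that the single orbit $\Ad(G)h_0$ might lie in the complement of $\Omega$. Your repair works, but state it precisely: keep $v$ and $\Omega$ fixed, and vary only the element being conjugated. That is, consider
\[
\Phi\colon G\times\tori^{\mathrm{reg}}\times G\times\tori^{\mathrm{reg}}\to\g\times\g,\qquad (g_1,h_a,g_2,h_b)\mapsto(\Ad(g_1)h_a,\Ad(g_2)h_b),
\]
where $\tori^{\mathrm{reg}}$ is the set of $V$-regular elements of $\tori$.

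\begin{itemize}
\item Since $\Psi$ is dominant, $\Phi^{-1}(g^{-1}\Omega\times g^{-1}\Omega)$ is a nonempty open subset of this irreducible variety.
\item The condition that $V_0+g_1V_0+g_2V_0$ be direct depends only on $(g_1,g_2)$, because $K_{\Ad(g_i)h}=g_iV_0$ for every $V$-regular $h$.
\item That condition is open, and it is nonempty because it contains $(p,h_0,q,h_0)$ by \cref{lem:three-spaces}.
\end{itemize}

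So the two sets meet. Setting $a=\Ad(gg_1)h_a$ and $b=\Ad(gg_2)h_b$ gives $a,b\in\Omega$ with $K_a+K_b+K_x=g(g_1V_0+g_2V_0+V_0)$ direct. If instead ``vary $h_0$'' meant changing the anchor that produced $v$ and $\Omega$, that would change $\Delta'$. It would then require exactly the paper's gluing step.

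The trade-off is this. Your route never compares two realizers, but it pays with fiddly joint-genericity bookkeeping. The paper's gluing via $V^\g=0$ is shorter and avoids the obstacle you identified altogether.
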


\begin{proof}
Let \(h_0\) and \(h'_0\) be \(V\)-regular elements relative to \(\h\) and \(\h'\) respectively.
Applying \cref{prop:open-realization} to \((h_0, \h)\) and \((h'_0, \h')\), we obtain
vectors \(v_{h_0},v_{h'_0}\) and nonempty open subsets
\(\Omega_{h_0},\Omega_{h'_0}\subseteq\g\). The irreducibility of the affine space \(\g\) gives
\[
  \Omega_{h_0}\cap\Omega_{h'_0}\ne\varnothing.
\]
It implies that the linear map
\[
  \g\longrightarrow V,\qquad y\longmapsto y(v_{h_0}-v_{h'_0})
\]
vanishes on a nonempty open set, and therefore vanishes identically. 
Thus
\[
  v_{h_0}-v_{h'_0}\in V^\g=0.
\]
Consequently all the vectors \(v_{h_0}\) coincide, and denote their common
value by \(v\).

Put \(\Delta'= \Delta- D_v\). Let \(s\) be semisimple and contained in a Cartan subalgebra
\(\tori\).  Choose a \(V\)-regular element \(h_0\in\tori\).
Applying 2-locality to \((s,h_0)\), there is \(v'\in V\) such that
\[
  \Delta'(s)=sv',\qquad
  0=\Delta'(h_0)=h_0 v',
\]
which force \(v'\in K_{h_0}=V^\tori\) and \(\Delta'(s)=0\). Hence \(\Delta(s)= D_v(s)= sv\).
\end{proof}

\section{Proof of Theorem~\ref{thm:semisimple}}\label{sec:proof-of-thm1.1}

\begin{lemma}\label{lem:sl2-nilpotent-separation}
Let $e,h,f$ be the standard generators of $\mathfrak{sl}_2$,
and $V$ a finite-dimensional $\mathfrak{sl}_2$-module.
If $S\in\operatorname{End}_{\mathfrak{sl}_2}(V)$ is semisimple,
then there exist semisimple elements $q_0,q_1\in\mathfrak{sl}_2$,
such that, for every operator $T\in\operatorname{End}_{\mathfrak{sl}_2}(V)$ with $[S,T]=0$,
\[
(S+e+T)\ker(S+q_0)\cap(S+e+T)\ker(S+q_1)\cap(S+e)\ker T= 0.
\]
In particular, when $T=0$, it follows that
\[
 (S+e)\ker(S+q_0)\cap(S+e)\ker(S+q_1)=0.
\]
\end{lemma}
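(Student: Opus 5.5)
The plan is to diagonalize \(S\), reduce to a single eigenvalue of \(S\), choose \(q_0,q_1\) so that only the eigenvalue \(0\) survives, and then settle that case with a weight argument.

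\textbf{Reduction to eigenspaces of \(S\).} Since \(S\) is semisimple and commutes with \(\mathfrak{sl}_2\), we have \(V=\bigoplus_{c}V^{(c)}\), where \(V^{(c)}=\ker(S-c)\) runs over the finitely many eigenvalues \(c\) of \(S\) and each \(V^{(c)}\) is an \(\mathfrak{sl}_2\)-submodule. Any \(T\) with \([S,T]=0\) that commutes with \(\mathfrak{sl}_2\) preserves every \(V^{(c)}\). On \(V^{(c)}\) the operators \(S+q_i\), \(S+e+T\) and \(S+e\) act as \(c+q_i\), \(c+e+T\) and \(c+e\). Every operator in the statement is block diagonal, so each of the three sets \(A\ker B\) is the direct sum of its components. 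Hence the triple intersection vanishes as soon as it vanishes on each \(V^{(c)}\), and it suffices to work one eigenvalue at a time.

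\textbf{Choice of \(q_0,q_1\).} I take \(q_0=th\) and \(q_1=\Ad(g)(th)=t\,\Ad(g)h\), with \(g=\exp(f)\) (or \(g\) generic in \(SL_2\)) and \(t\in\F^\times\) to be fixed.
\begin{itemize}[leftmargin=*]
\item The eigenvalues of \(q_0\) and \(q_1\) on \(V\) lie in \(t\mathbb Z\), and only finitely many integers occur as \(h\)-weights. So I choose \(t\) with \(-c/t\) not an \(h\)-weight for every nonzero eigenvalue \(c\) of \(S\).
\item Then \(\ker(c+q_i)|_{V^{(c)}}=0\) for \(c\neq0\), so the first two sets are zero on those components.
\item Only \(V^{(0)}\) remains. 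There \(\ker q_0=V^{(0)}_0\), the \(h\)-weight-zero space, and \(\ker q_1=gV^{(0)}_0\).
\end{itemize}

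\textbf{The case \(c=0\).} Suppose
\[
x=(e+T)v_0=(e+T)gv_1=ew,\qquad v_0,v_1\in V^{(0)}_0,\quad Tw=0.
\]
I would write \(V^{(0)}=\bigoplus_n L(n)\otimes M_n\) with \(T=\bigoplus_n\id\otimes\tau_n\), which reduces everything to a single isotypic block \(L(n)\otimes M\). Only even \(n\) matter, and for \(n=0\) the operator \(e\) is zero, so \(x=0\) there.

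For \(n\geq2\) even, I use weights. \(T\) preserves \(h\)-weight spaces and \(e\) raises weight by \(2\). Hence \((e+T)v_0\) has components only in weights \(0\) and \(2\), namely \(Tv_0\) and \(ev_0\). Comparing with \(x=ew\), whose weight components are \(e\) of those of \(w\), gives \(Tv_0=ew_{-2}\) and \(ev_0=ew_0\). Injectivity of \(e\) on nonpositive weights when \(n>0\) then gives \(w_0=v_0\) and \(Tw_0=0\), so \(Tv_0=0\) and \(x=ev_0\). The middle equation then reads \((e+T)(gv_1-v_0)=0\), with \(v_0\in V_0\) and \(gv_1\in gV_0\).

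\textbf{Main obstacle.} It is now to show that \(\ker(e+T)\) meets \(V_0+gV_0\) only in a way that forces \(ev_0=0\), uniformly in \(T\). I would handle this as follows.
\begin{itemize}[leftmargin=*]
\item Split \(M\) by generalized eigenvalues of \(\tau\).
\item Where \(\tau\) is invertible, \(e+T\) is invertible because \(e\) is nilpotent and commutes with \(T\). Then \(gv_1=v_0\), and the directness of \(V_0\oplus gV_0\) (the \(\mathfrak{sl}_2\)-analogue of \cref{lem:three-spaces}, valid since \(n>0\)) gives \(v_0=0\).
\item Where \(\tau\) is nilpotent, \(e+T\) is nilpotent with kernel computable via a filtration by \(\ker\tau^k\). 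The weight-zero vector \(v_0\) already satisfies \(Tv_0=0\), and an explicit computation with \(g=\exp(f)\) in the basis \(f^kv_{\mathrm{high}}\) should force \(v_0=0\).
\end{itemize}
If the explicit choice \(g=\exp(f)\) fails here, I would instead take \(g\) generic, since the vanishing is a Zariski-open condition in \(g\).
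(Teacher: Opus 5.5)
There is a genuine gap at the final step. Your framework matches the paper's: the same \(q_0=th\) and \(q_1=t\Ad(\exp f)h\), the same choice of \(t\) killing every \(S\)-eigenvalue \(c\neq0\), and the same reduction to blocks \(L(2k)\otimes M\) of \(\ker S\) on which \(T=\id\otimes\tau\). Your weight comparison giving \(Tv_0=0\) and \(x=ev_0\) is also correct.

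However, the argument stops at the step that carries all the content: you never prove \(x=0\). The invertible-\(\tau\) case is immediate, since there \(Tv_0=0\) already forces \(v_0=0\). The nilpotent case, which is the real one, is left as ``an explicit computation \dots\ should force \(v_0=0\)''. The fallback to a generic \(g\) does not close this either. The conclusion must hold for all \(T\) simultaneously, and openness of a good set of \(g\) gives nothing until you exhibit one good \(g\), which is the same computation.

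What you are missing is that \(T\) acts only on the multiplicity factor, so no analysis of \(\tau\) (generalized eigenspaces, filtrations by \(\ker\tau^j\)) is needed. Write \(v=v_{k,0}\), \(u=\exp(f)v\), \(v_0=v\otimes m_0\) and \(gv_1=u\otimes m_1\), where \(\tau m_0=0\). Your relation \((e+T)(gv_1-v_0)=0\) then reads
\[
 eu\otimes m_1+u\otimes\tau m_1-ev\otimes m_0=0 .
\]
The vectors \(eu,u,ev\) are linearly independent in \(L(2k)\) for \(k\ge1\). Only \(u\) has a nonzero component of weight \(-2k\). Among the remaining two, \(eu\) has the nonzero weight-zero component \(efv\), whereas \(ev\) is pure of weight \(2\). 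Hence \(m_0=0\) and \(x=0\).

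The paper reaches the same point by running the \(\ker T\) argument symmetrically on both realizations. Since \(x=ew\) with \(Tw=0\) and \([T,e]=0\), one has \(Tx=0\). Write \(x=u_i\otimes\tau m_i+eu_i\otimes m_i\) with \(u_0=v\) and \(u_1=u\). Independence of \(u_i\) and \(eu_i\) then gives \(\tau m_0=\tau m_1=0\). After that, \(ev\otimes m_0=eu\otimes m_1\) together with independence of \(ev\) and \(eu\) forces \(x=0\).

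Your weight trick works for the weight vector \(v_0\) but not for the non-weight vector \(gv_1\), which is exactly where you got stuck.
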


\begin{proof}
Since \(S\) acts semisimply and commutes with \(\mathfrak{sl}_2\), put
\[
  V_\lambda=\ker(S-\lambda\id_V),
  \qquad
  M_{\lambda,m}=\operatorname{Hom}_{\mathfrak{sl}_2}\bigl(L(m),V_\lambda\bigr),
\]
where \(L(m)\) denotes the simple \(\mathfrak{sl}_2\)-module of highest
weight \(m\).  The canonical evaluation maps give the joint isotypic
decomposition
\[
  V\cong
  \bigoplus_{\lambda,m}M_{\lambda,m}\otimes L(m),
\]
where \(\mathfrak{sl}_2\) acts on the second factor and \(S\) acts by the scalar
\(\lambda\).

Choose \(t\in\F^\times\) such that
\[
  \lambda+tj\ne0
\]
whenever \(M_{\lambda,m}\ne0\) and \(j\ne0\) is an \(h\)-weight of
\(L(m)\).  If \(v_{k,0}\) is a nonzero zero-weight vector in \(L(2k)\),
then
\[
  \ker(S+th)=\bigoplus_{k\ge0} M_{0,2k}\otimes\F v_{k,0}.
\]
Set
\[
  q_0=th,\qquad
  q_1=t\Ad(\exp_{SL_2}(f))h= t(h+2f).
\]
Both \(q_0\) and \(q_1\) are semisimple, and
\[
  \ker(S+q_1)=\exp_{SL_2}(f)\ker(S+q_0).
\]

It follows that
\[
(S+e+T)\ker(S+q_0)\cap (S+e+T)\ker(S+q_1)\subseteq \bigoplus_{k\ge0} M_{0,2k}\otimes L(2k).
\]
For $k=0$, both $S$ and $e$ act as zero on
$M_{0,0}\otimes L(0)$, so the corresponding component of
$(S+e)\ker T$ is zero. If $y$ lies in the threefold intersection on
$M_{0,2k}\otimes L(2k)$, $k\ge 1$, then
\[
\begin{aligned}
 y&=Tm_0\otimes v_{k,0}+m_0\otimes ev_{k,0}\\
  &=Tm_1\otimes \exp_{SL_2}(f)v_{k,0}+m_1\otimes e\exp_{SL_2}(f)v_{k,0}, \quad m_0,m_1\in M_{0,2k},
\end{aligned}
\]
and
\[
 y\in (S+e)\ker T\subseteq\ker T.
\]
We have
\[
Ty= T^2m_0\otimes v_{k,0}+ Tm_0\otimes ev_{k,0}= 0.
\]
The linear independence of $v_{k,0}$ and $ev_{k,0}$ gives \(Tm_0=0\).
Similarly, \(Tm_1=0\). Therefore
\[
 y=m_0\otimes ev_{k,0}
  =m_1\otimes e\exp_{SL_2}(f)v_{k,0},
\]
which forces $y=0$ by the linear independence of $ev_{k,0}$ and $e\exp_{SL_2}(f)v_{k,0}$.
Thus the threefold intersection is zero on every joint summand on
which the first two spaces are supported, and it is therefore zero on $V$.

Finally, when $T=0$, one has $\ker T=V$, and the first two spaces
are contained in $(S+e)V$. Hence the threefold intersection reduces
to
\[
 (S+e)\ker(S+q_0)\cap(S+e)\ker(S+q_1),
\]
which proves the last assertion.
\end{proof}

\begin{proof}[Proof of \cref{thm:semisimple}]
Assume \(V\) is irreducible, nontrivial, and faithful. By \cref{prop:all-semisimple}, there exists \(v\in V\) such that
\(\Delta'=\Delta-D_{v}\) vanishes on every semisimple element in \(\g\).
For a nonsemisimple element \(x\in\g\), write its Jordan
decomposition as
\[
  x=s+e,\qquad [s,e]=0,
\]
where \(s\) is semisimple and \(e\ne 0\) is nilpotent.
The centralizer \(\g^s\) is reductive, and \(e\) lies in its derived algebra.
The Jacobson--Morozov theorem supplies a triple \((e,h,f)\subseteq\g^s\).
Set
\[
  \mathfrak l_e= \Span_\F\{e,h,f\}\cong\mathfrak{sl}_2.
\]
Choose \(q_0,q_1\) as in \cref{lem:sl2-nilpotent-separation}.
The elements \(s+q_0\) and \(s+q_1\) are semisimple, so
\[
  \Delta'(s+q_0)= \Delta'(s+q_1)=0.
\]
For each \(j\), apply 2-locality to the pair \((x,s+q_j)\). Consequently,
\[
  \Delta'(x)\in(s+e)K_{s+q_0}\cap (s+e)K_{s+q_1},
\]
which forces \(\Delta'(x)=0\). Thus \(\Delta=D_v\) is a derivation.
\end{proof}

\section{Module-valued 2-local derivations on abelian Lie algebras}

The following elementary lemma isolates all possible nonlinear behavior in
the classification. It applies to an arbitrary module over an abelian Lie
algebra.

\begin{lemma}\label{lem:abelian-classification}
Let \(\aLie\) be a finite-dimensional abelian Lie algebra and \(M\) a \(\aLie\)-module. A map
\(\delta\colon\aLie\to M\) is a 2-local derivation if and only if
\[
 \delta=f+H,
 \quad
 f\in\Der(\aLie,M),\quad
 H\in\Homog(\aLie,M^\aLie),
\]
where
\[
\Der(\aLie,M)\cap \Homog(\aLie,M^\aLie)= \operatorname{Hom}_{\F}(\aLie,M^\aLie).
\]
\end{lemma}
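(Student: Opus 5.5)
The plan is to prove the two inclusions separately and then the intersection formula. Everything rests on the observation that, since \(\aLie\) is abelian, a linear map \(D\colon\aLie\to M\) is a derivation exactly when \(xD(y)=yD(x)\) for all \(x,y\in\aLie\). In particular, every linear map \(\aLie\to M^\aLie\) is a derivation, because both sides of this identity then vanish.

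\emph{Sufficiency.} Let \(\delta=f+H\) with \(f\in\Der(\aLie,M)\) and \(H\in\Homog(\aLie,M^\aLie)\), and fix \(x,y\in\aLie\).
\begin{itemize}
\item If \(x,y\) are linearly independent, extend them to a basis and choose a linear \(L\colon\aLie\to M^\aLie\) with \(L(x)=H(x)\) and \(L(y)=H(y)\).
\item If \(x,y\) are dependent, say \(y=\lambda x\) (the case \(x=0\) being symmetric), choose a linear \(L\colon\aLie\to M^\aLie\) with \(L(x)=H(x)\). Homogeneity of \(H\) gives \(L(y)=\lambda H(x)=H(y)\). This also covers \(x=0\), since \(H(0)=0\) and \(f(0)=0\).
\end{itemize}
In both cases \(L\) is a derivation, so \(D_{x,y}=f+L\) is a derivation realizing \(\delta\) at \(x\) and \(y\).

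\emph{Necessity.} Let \(\delta\) be 2-local. First, \(\delta\) is homogeneous: applying 2-locality to the pair \((x,\lambda x)\) gives \(\delta(\lambda x)=D(\lambda x)=\lambda D(x)=\lambda\delta(x)\). Second, applying 2-locality to \((x,y)\) and using the identity for \(D_{x,y}\) gives
\[
 x\delta(y)=y\delta(x),\qquad x,y\in\aLie .
\]
Now take \(y,z\in\aLie\) and an arbitrary \(x\). This identity yields
\[
x\bigl(\delta(y+z)-\delta(y)-\delta(z)\bigr)=(y+z)\delta(x)-y\delta(x)-z\delta(x)=0 .
\]
Hence \(\delta(y+z)-\delta(y)-\delta(z)\in M^\aLie\). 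So the composite \(\varphi\colon\aLie\to M/M^\aLie\) of \(\delta\) with the quotient map is additive and homogeneous, hence linear.

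Choose a linear lift \(f\colon\aLie\to M\) of \(\varphi\) by prescribing it on a basis. Then \(f(y)-\delta(y)\in M^\aLie\) for every \(y\). Consequently
\[
xf(y)=x\delta(y)=y\delta(x)=yf(x),
\]
so \(f\in\Der(\aLie,M)\). Finally, \(H:=\delta-f\) is a difference of homogeneous maps, hence homogeneous, and it takes values in \(M^\aLie\). This gives \(\delta=f+H\) as claimed.

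\emph{Intersection.} A map lying in \(\Der(\aLie,M)\cap\Homog(\aLie,M^\aLie)\) is linear with values in \(M^\aLie\), so it belongs to \(\operatorname{Hom}_\F(\aLie,M^\aLie)\). Conversely, a linear map into \(M^\aLie\) is homogeneous, and it is a derivation by the opening remark.

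The only step that is not routine is producing the linear part \(f\). The key point is to pass to the quotient \(M/M^\aLie\): additivity of \(\delta\) holds there, by the identity \(x\delta(y)=y\delta(x)\), even though it fails in \(M\) itself.
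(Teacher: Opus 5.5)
Your proof is correct and follows the paper's argument essentially step for step: homogeneity via the pair $(x,\lambda x)$, the identity $x\delta(y)=y\delta(x)$, additivity of $\delta$ modulo $M^{\aLie}$, a linear lift $f$ with $H=\delta-f$, and, for the converse, interpolating $H$ at two points by a linear map $L\colon\aLie\to M^{\aLie}$. Your explicit treatment of linearly dependent pairs and your proof of the intersection formula fill in details the paper leaves implicit.
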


\begin{proof}
Two-locality first gives homogeneity \(\delta(\lambda a)=\lambda\delta(a)\).
For \(a,b\in\aLie\), we have \(a\delta(b)=b\delta(a)\) by a derivation \(D_{a,b}\).
For \(a,b,c\in\aLie\), it follows that
\[
 c\bigl(\delta(a+b)-\delta(a)-\delta(b)\bigr)=(a+b)\delta(c)-a\delta(c)-b\delta(c)=0,
\]
which implies that the induced map \(\overline\delta\colon\aLie\longrightarrow M/M^\aLie\) is linear.

Choose a linear lift \(f\colon\aLie\to M\) of \(\overline\delta\), and put
\(H=\delta-f\).  Then \(H(\aLie)\subseteq M^\aLie\), and \(H\) is
homogeneous.  Since elements of \(\aLie\) annihilate \(H(\aLie)\), we have
\[
 af(b)=a\delta(b)=b\delta(a)=bf(a),
\]
which implies that \(f\in\Der(\aLie,M)\).

Conversely, let \(\delta=f+H\) have the displayed form.  For any \(a,b\in \aLie\),
homogeneity allows one to choose a linear map
\(L_{a,b}\colon\aLie\to M^\aLie\) satisfying
\[
 L_{a,b}(a)=H(a),\qquad L_{a,b}(b)=H(b).
\]
Then \(f+L_{a,b}\) is a derivation realizing \(\delta\) at \(a,b\).
\end{proof}

\section{Proof of Theorem~\ref{thm:reductive}}

We now return to a reductive Lie algebra
\[
 \g=\s\oplus\z,\qquad \s=[\g,\g],\quad \z=Z(\g),
\]
then \(V= \s V\oplus V^\s\) as a \(\g\)-module, and
\[
 \Der(\g,V)=\{D_u:u\in \s V\}\oplus \Der(\z,V^\s).
\]

\begin{lemma}\label{lem:core-rigidity}
Every 2-local derivation \(\Delta_{\s V}\colon\g\to \s V\) is an inner derivation.
\end{lemma}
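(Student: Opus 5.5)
We first reduce to the case where \(\Delta\) vanishes on \(\s\), and then control the central directions with the \(\mathfrak{sl}_2\) separation lemma.

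\textbf{Reduction on \(\s\).} Put \(W=\s V\). This is a \(\g\)-submodule with \(W^\s=0\), and every derivation \(\g\to W\) is some \(D_u\) with \(u\in W\). Restricting derivations from \(\g\) to \(\s\) shows that \(\Delta|_\s\colon\s\to W\) is a 2-local derivation of the semisimple algebra \(\s\). By \cref{thm:semisimple} and Whitehead's lemma, \(\Delta|_\s=D_u|_\s\) for a unique \(u\in W\), unique because \(W^\s=0\). Replacing \(\Delta\) by \(\Delta-D_u\), which is still 2-local, we may assume \(\Delta(\s)=0\). It then remains to show \(\Delta(s+z)=0\) for all \(s\in\s\) and \(z\in\z\).

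\textbf{Splitting \(W\) by the centre.} Since \(\z\) is abelian and acts by \(\s\)-endomorphisms, \(W\) splits into joint generalized eigenspaces \(W=\bigoplus_\chi W_\chi\) for \(\z\). Each \(W_\chi\) is a \(\g\)-submodule with \(W_\chi^\s=0\). By \cref{lem:direct-sum-reduction}, the projected maps \(\pi_\chi\circ\Delta\) are 2-local, still vanish on \(\s\), and the problem reduces to a single \(W_\chi\). On \(W_\chi\), each \(z\in\z\) acts as \(S_z+T_z\), where \(S_z=\chi(z)\id\) is a scalar and \(T_z\) is nilpotent. Both commute with the \(\s\)-action, and \(T_z\) commutes with any semisimple \(\s\)-endomorphism.

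\textbf{Main step.} Fix \(x=s+z\) and take the Jordan decomposition \(s=s_0+e\) in \(\s\).
\begin{itemize}[leftmargin=2em]
\item If \(e\ne0\), choose an \(\mathfrak{sl}_2\)-triple \((e,h,f)\subseteq\s^{s_0}\) by Jacobson--Morozov.
\item If \(e=0\), use a Cartan subalgebra containing \(s_0\) and a \(V\)-regular element in it, as in \cref{prop:all-semisimple}.
\end{itemize}
In the nilpotent case, the action of \(x\) on \(W_\chi\) is \(S+e+T\) with \(S=s_0+\chi(z)\) semisimple and commuting with \(\mathfrak l_e\), and \(T=T_z\) nilpotent, commuting with \(S\) and with \(\mathfrak l_e\). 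This is exactly the configuration of \cref{lem:sl2-nilpotent-separation}.

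Pairing \(x\) with the semisimple elements \(s_0+q_0\) and \(s_0+q_1\) of \(\s\), on which \(\Delta\) vanishes, gives
\[
\Delta(x)\in x\ker(s_0+q_0)\cap x\ker(s_0+q_1).
\]
A third constraint of the form \((S+e)\ker T\) should come from pairing \(x\) with a second element \(s+z'\) or \(s\). For this pairing to be useful, we first need \(\Delta=0\) on elements whose central part acts semisimply.

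\textbf{Order of the argument.} We will therefore proceed in three stages.
\begin{enumerate}[label=(\roman*),leftmargin=2em]
\item Show \(\Delta(z)=0\) for \(z\in\z\). Pair \(z\) with \(h_0\), \(\Ad(p)h_0\) and \(\Ad(g)h_0\), and use that \(z\) preserves the \(\tori\)-weight spaces of \(W_\chi\). Then \(\Delta(z)\in zK_{h_0}\cap zK_{\Ad(p)h_0}\), which lies inside \(zW_0\cap p\,zW_0=0\) by \cref{lem:three-spaces}.
\item Show \(\Delta(s+z)=0\) for semisimple \(s\), by the argument of \cref{prop:all-semisimple}. Here the kernel of a regular element is the zero-weight space, which \(z\) preserves.
\item Treat \(e\ne0\) via \cref{lem:sl2-nilpotent-separation}. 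Match the kernels \(K_{s_0+q_j}\) (computed in \(\s\)) against \(\ker(S+q_j)\), using that the extra scalar \(\chi(z)\) can be absorbed by choosing \(t\) generically.
\end{enumerate}

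\textbf{Expected obstacle.} The hardest point is this last matching. The elements at which \(\Delta\) is known to vanish lie in \(\s\), or in \(s+\z\) with \(s\) semisimple. Their kernels on \(W_\chi\) must be shown to be of the form demanded by the lemma, and this must hold uniformly in the nilpotent part \(T_z\). That uniformity in \(T_z\) is the reason the \(T\)-term was built into \cref{lem:sl2-nilpotent-separation}.
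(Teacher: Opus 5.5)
Your reduction to \(\Delta(\s)=0\), the splitting into generalized \(\z\)-weight summands \(W_\chi\), and stage (i) are sound. For (i) there is a shorter route: pairing \((s,z)\) gives \(\Delta(z)\in zK_s\subseteq K_s\) for every \(s\in\s\), so \(\Delta(z)\in W^\s=0\). The gaps come afterwards.

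In stage (ii), pairing \(x=s+z\) with a regular \(h_0\) of a Cartan subalgebra containing \(s\) only gives a realizer \(v'\in K_{h_0}\). Hence \(\Delta(x)=zv'\in zK_{h_0}\), which need not vanish, so the argument of \cref{prop:all-semisimple} does not close here. You also need the pairing \((z,x)\), available once \(\Delta(z)=0\): its realizer \(w\) satisfies \(zw=0\), so \(\Delta(x)=sw\in\operatorname{im}s\). Then \(\Delta(x)\in\ker s\cap\operatorname{im}s=0\).

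Stage (iii) is where the proposal genuinely fails. With \(S=s_0+\chi(z)\) and generic \(t\), the space \(\ker(S+th)\) lies in the \(s_0\)-eigenspace for \(-\chi(z)\). The kernel \(K_{s_0+th}\) that 2-locality actually produces lies in the \(s_0\)-eigenspace for \(0\). For \(\chi(z)\ne0\) these spaces are different, and genericity of \(t\) cannot absorb the scalar.

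Your candidate third pairings do not work either. \(\Delta(s+z')\) is unknown when \(s\) is not semisimple, so that pairing is circular. Pairing with \(s\) only gives \(\Delta(x)\in z\ker s\), and that is not enough. Let \(W'\) be the subspace annihilated by \(s_0\) and \(\mathfrak l_e\); it is \(z\)-stable, and \(x\) acts on it as \(z\). So the intersection of your three constraint spaces contains \(zW'\). This is nonzero in general, e.g. for \(\chi=0\) and \(z\) acting on \(W'\) by a nonzero nilpotent.

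The missing idea is to pair \(x\) with \(z\) itself and to feed the full central action into \cref{lem:sl2-nilpotent-separation}. Take \(S=s_0|_W\) and \(T=z|_W\), not the nilpotent part \(T_z\). That lemma never uses nilpotency of \(T\); it only needs \(T\in\operatorname{End}_{\mathfrak{sl}_2}(W)\) with \([S,T]=0\), which holds because \(z\) is central. Then:
\begin{itemize}
\item pairing \((s_0+q_j,x)\) gives exactly \(\Delta(x)\in(S+e+T)\ker(S+q_j)\);
\item pairing \((z,x)\) gives \(\Delta(x)=(s_0+e)w\) with \(zw=0\), i.e. \(\Delta(x)\in(S+e)\ker T\).
\end{itemize}
The lemma then yields \(\Delta(x)=0\), with no matching problem.

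The paper does essentially this, after first disposing of the summands with \(\chi\ne0\). It chooses \(z_0\) with \(\chi(z_0)\ne0\), so \(z_0\) is invertible on \(W_\chi\). Pairing \((z_0,x)\) then forces the realizer, and hence \(\Delta(x)\), to vanish. On the summand with \(\chi=0\) one has \(S=s_0\) and \(T=z\) nilpotent.
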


\begin{proof}
All derivations from \(\g\) to \(\s V\) are inner.
The restriction of \(\Delta_{\s V}\) to \(\s\) is a 2-local derivation.  By
\cref{thm:semisimple}, let \(\Delta'= \Delta_{\s V}- D_v\) for some \(v\in \s V\) such that
\(\Delta'(s)= 0\), \(\forall s\in \s\).
For \(z\in\z\) and \(\forall s\in\s\), two-locality at \((s,z)\) gives \(\Delta'(z)\in zK_s\subseteq K_s\),
which implies \(\Delta'(z)\in \bigcap_{s\in\s}K_s=(\s V)^\s=0\). Hence \(\Delta'(z)=0\).

The $\z$-module $\s V$ has a generalized weight space decomposition
\[
 \s V=\bigoplus_{\chi\in\z^*}(\s V)^{(\chi)},
\]
where \((\s V)^{(\chi)}\) is a \(\g\)-submodule.
It is therefore enough to treat
\[
\Delta'_{\chi}=\pi_{\chi}\circ\Delta'\colon\g\to (\s V)^{(\chi)}.
\]
For \(\chi\ne0\), choose \(z_0\in\z\) with
\(\chi(z_0)\ne0\), then \(z_0\) acts invertibly on \((\s V)^{(\chi)}\).
Two-locality at \((z_0,x)\), together with \(\Delta'_{\chi}(z_0)=0\), gives
\(\Delta'_{\chi}(x)=0\) for every \(x\in\g\).
For \(\chi=0\),  every \(z\in\z\) acts nilpotently.
Fix \(x=a+z\), where \(a\in\s\) and \(z\in\z\).  If \(a\) is semisimple, two-locality at \((a,x)\) and
\((z,x)\) gives
\[
 \Delta'_{\chi}(x)\in z K_a\cap aK_z\subseteq K_a\cap \operatorname{im}a= 0,
\]
which forces \(\Delta'_{\chi}(x)=0\).
If \(a\) has the Jordan decomposition in \(\s\)
\[
 a=s+e,\quad [s,e]=0,
\]
with \(s\) semisimple and \(e\ne 0\) nilpotent, choose a Jacobson--Morozov triple \((e,h,f)\subseteq\s^s\)
and put \(\mathfrak l_e=\Span\{e,h,f\}\).  Apply
\cref{lem:sl2-nilpotent-separation} to the actions of \(s\), \(z\) and
\(\mathfrak l_e\).  It gives semisimple \(q_0,q_1\in\mathfrak l_e\) satisfying
\[
(s+e+z)\ker(s+q_0)\cap(s+e+z)\ker(s+q_1)\cap(s+e)\ker z= 0.
\]  
Two-locality at \((s+q_i,x)\) implies
\[
 \Delta'_{\chi}(x)\in(s+e+z)K_{s+q_i},\quad i=0,1.
\]
Two-locality at \((z,x)\) also gives
\[
 \Delta'_{\chi}(x)\in(s+e)K_z.
\]
Thus \(\Delta'_{\chi}(x)=0\).
We have proved \(\Delta'=0\) on every generalized central-weight summand. Hence \(\Delta_{\s V}= D_v\).
\end{proof}

\begin{proof}[Proof of \cref{thm:reductive}]
For \(\Delta\in \operatorname{2LocDer}(\g,V)\), let
\[
\Delta_{\s V}= \operatorname{pr}_{\s V}\circ\Delta, \qquad\Delta_{V^\s}= \operatorname{pr}_{V^\s}\circ\Delta,
\]
where \(\operatorname{pr}_{\s V}, \operatorname{pr}_{V^\s}\) are the canonical projections.
Then \(\Delta_{\s V}\) and \(\Delta_{V^\s}\) are both 2-local derivations.
By \cref{lem:core-rigidity}, there is \(u\in \s V\) such that
\(\Delta_{\s V}=D_u\).  Every derivation from \(\g\) to \(V^\s\) vanishes on \(\s\), so
\(\Delta_{V^\s}\) depends only on \(\z\), and induces a 2-local derivation
\[
 \delta_{V^\s}\colon\z\longrightarrow V^\s.
\]
By \cref{lem:abelian-classification},
\[
 \delta_{V^\s}=f+H,\qquad
 f\in\Der(\z,{V^\s}),\quad
 H\in\Homog(\z,V^\g).
\]
The map \(D=D_u+f\) is a derivation, and \(\Delta=D+H\circ\prz\).

Conversely, take \(D\in\Der(\g,V)\) and
\(H\in\Homog(\z,V^\g)\).  Given \(x,y\in\g\), choose a linear map
\(L_{x,y}\colon\z\to V^\g\) interpolating \(H\) at
\(\prz(x),\prz(y)\).  Then \(D+L_{x,y}\circ\prz\)
is a derivation and realizes \(D+H\circ\prz\) at \(x,y\).  This proves
\eqref{eq:classification-intro}.  Finally, \(H\circ\prz\) is a derivation
exactly when it is linear, proving \eqref{eq:intersection-intro}.
\end{proof}

\begin{proof}[Proof of \cref{cor:criterion}]
If \(V^\g=0\), the homogeneous summand in
\eqref{eq:classification-intro} is zero.  If \(\dim\z\leq1\), every
homogeneous map from \(\z\) to \(V^\g\) is linear.  In both cases all 2-local
derivations are derivations.
Conversely, suppose that \(\dim\z\ge2\) and \(V^\g\ne0\).  Choose linearly
independent functionals \(\alpha,\beta\in\z^*\) and \(0\ne v_0\in V^\g\).
The map
\[
 H(z)=
 \begin{cases}
   \alpha(z)^2\beta(z)^{-1}v_0,&\beta(z)\ne0,\\
   0,&\beta(z)=0,
 \end{cases}
\]
is homogeneous and nonadditive.  Hence \(H\circ\prz\) is a 2-local
derivation which is not a derivation.

If \(V\) is completely reducible, its nontrivial simple summands have
vanishing first cohomology, while the trivial summand is \(V^\g\).  Thus
every derivation is of the form
\[
 D(s+z)=(s+z)v+L(z),
 \qquad L\in\operatorname{Hom}_{\F}(\z,V^\g).
\]
Absorbing \(L\) into \(H\) gives \eqref{eq:cr-form-intro}.
\end{proof}

\vspace{2mm}
\noindent
{\bf Acknowledgments. } This research is partially supported by the National Natural Science Foundation of China (No. 12271210) and the Natural Science Foundation of Fujian Province, China (No. 2021J01862).


\begin{thebibliography}{22}

\bibitem{AyuKudRak} Sh. A. Ayupov,  K.  K.  Kudaybergenov,  I. S. Rakhimov, 2-Local derivations on finite-dimensional Lie algebras, {\it Linear Algebra and its Applications}, \textbf{474}, (2015), 1-11.

\bibitem{KS} S. Kowalski, Z. S{\l}odkowski, A characterization of multiplicative linear functionals in {B}anach algebras, {\it Studia Mathematica}, \textbf{67}, (1980), 215--223.

\bibitem{Sem} P. \v{S}emrl,  Local automorphisms and derivations on $B(H)$, {\it Proc. Amer. Math. Soc.}, \textbf{125}, (1997), 2677--2680.

\bibitem{WLT2022} S. Wang, Z. Li, X. Tang, 2-local derivations and biderivations of $\mathfrak{sl}(2)$ on all simple modules, {\it arXiv:2206.07974}, 2022.

\bibitem{WTL2025} S. Wang, X. Tang, Z. Li, Local and 2-local derivations of {$\mathfrak{sl}(2)$} on any finite-dimensional completely reducible module, {\it Journal of Algebra and Its Applications}, \textbf{24}, (2025), 2550197.
    
\bibitem{YZ} Y. Yao, K. Zhao, Local properties of {J}acobson--{W}itt algebras, {\it Journal of Algebra}, \textbf{586}, (2021), 1110--1121.

\end{thebibliography}
\end{document}